\numberwithin{equation}{section}
\newtheorem{theorem}{Theorem}[section]
\newtheorem{corollary}[theorem]{Corollary}
\newtheorem{lemma}[theorem]{Lemma}
\newenvironment{proof}[1][Proof]{\textbf{#1.} }{\ \rule{0.5em}{0.5em}}
\begin{document}

\title{Backward Stochastic Differential Equations Associated with the
Vorticity Equations}
\author{A. B. Cruzeiro and Z. M. Qian \\
Universidade de Lisboa and Oxford University}
\maketitle

\begin{abstract}
In this paper, we derive a non-linear version of the Feynman-Kac formula for
the solutions of the vorticity equation in dimension 2 with space periodic
boundary conditions. We prove the existence (global in time) and uniqueness
for a stochastic terminal value problem associated with the vorticity
equation in dimension 2.
\end{abstract}

\section{Introduction}

The Feynman-Kac formula, in its original form derived from the idea of path
integration in Feynman's PhD thesis (which is now available in a new print %
\cite{MR2605630}), is a representation formula for solutions of Sch\"{o}%
ingder's equations, and in the hand of Kac, is an explicit formula written
in terms of functional integrals with respect to the Wiener measure, the law
of Brownian motion.

Bismut \cite{MR0453161}, Pardoux-Peng \cite{MR1037747} and Peng \cite%
{MR1149116}, by utilizing It\^{o}'s lemma together with It\^{o}'s martingale
representation, have obtained an interesting non-linear version of
Feynman-Kac's formula for solutions of semi-linear parabolic equations in
terms of backward stochastic differential equations (BSDE). The goal of the
present paper is to derive a Feynman-Kac formula for solutions of the
Navier-Stokes equations in the same spirit of Bismut and Pardoux-Peng \cite%
{MR1037747}, and to study the random terminal problem of the stochastic
differential equations associated with the vorticity equations.

The main idea contained in \cite{MR0453161}, \cite{MR1037747} may be
described as the following. Let $u(t,x)=(u^{1}(t,x)$, $\cdots $, $%
u^{m}(t,x)) $ be a smooth solution to the Cauchy initial value problem of
the following system of semi-linear parabolic equations 
\begin{equation}
\frac{\partial }{\partial t}u^{i}-\nu \Delta u^{i}+f^{i}(u,\nabla u)=0\text{%
, \ }u(0,x)=u_{0}(x)\text{ \ \ in }\mathbb{R}^{d}  \label{in-01}
\end{equation}%
where $i=1,\cdots ,m$, and $\nu >0$ a constant. Let $B=(B^{1},\cdots ,B^{d})$
be the standard Brownian motion on a complete probability space $(\Omega ,%
\mathcal{F},\mathbb{P})$, $x\in \mathbb{R}^{d}$ and $T>0$. Let us read the
solution $u$ along Brownian motion $B$. More explicitly, let $Y_{t}=u(T-t,%
\sqrt{2\nu }B_{t}+x)$ for $t\in \lbrack 0,T]$ and $Z_{t}=\nabla u(T-t,\sqrt{%
2\nu }B_{t}+x)$, and apply It\^{o}'s formula to $u$ and $B$ to obtain 
\begin{equation}
\begin{array}{cc}
Y_{T}-Y_{t}=\int_{t}^{T}f(Y_{s},Z_{s})ds+\sqrt{2\nu }\int_{t}^{T}Z_{s}\cdot
dB_{s}\text{, \ } &  \\ 
Y_{T}=u_{0}(B_{T})\text{. \ \ \ \ \ \ \ \ \ \ \ \ \ \ \ \ \ \ \ \ } & 
\end{array}%
\text{ \ }  \label{in-02}
\end{equation}%
In literature, (\ref{in-01}) may be written in differential form 
\begin{equation}
dY=f(Y,Z)dt+\sqrt{2\nu }Z\cdot dB\text{, }Y_{T}=\xi \text{, \ }
\label{in-03}
\end{equation}%
where the arguments $s$, $t$ etc. are suppressed if no confusion may arise.
The differential equation above is an example of backward stochastic
differential equations, where the terminal value $Y_{T}=\xi $ is given. The
function $f$ appearing on the right hand side of (\ref{in-03}) is called the
(non-linear) \emph{driver}.

Pardoux-Peng \cite{MR1037747} made an important observation. If the
non-linear driver $f$ in BSDE\ (\ref{in-03}) is globally Lipschitz
continuous, then there is a unique adapted solution pair $(Y,Z)$ satisfying (%
\ref{in-03}) for a random terminal value $\xi \in L^{2}(\Omega ,\mathcal{F}%
_{T},\mathbb{P})$, which is not necessary in the form of $u_{0}(B_{T})$. The
solution $u$ and its gradient $\nabla u$ in turn can be represented in terms
of $(Y,Z)$. This representation may be considered as a nonlinear extension
of Feynman-Kac's formula to semi-linear parabolic equations.

More recently, Kobylanski \cite{MR1782267}, Delarue \cite{MR2053051},
Briand-Hu \cite{MR2391164}, Tevzadze \cite{MR2389055}, and etc. have
extended Pardoux-Peng's result to some BSDEs with non-linear drivers of
quadratic growth. These papers however mainly deal with scalar BSDEs only,
which corresponds to semi-linear scalar parabolic equations. It remains
largely an open problem whether the BSDE approach may be applied to
non-parabolic type of partial differential equations. We study in the
present paper a class of backward stochastic differential equations which
arise from the vorticity formulation of the Navier-Stokes equations, hence
provide Feynman-Kac type formula for solutions of the Navier-Stokes
equations.

Relations between the Navier-Stokes equation and foward-backward stochastic
differential equations formulated in the group of diffeomorphisms were
introduced in \cite{CS1}. 

\section{The vorticity equation}

Let us describe a class of (infinite dimensional) backward stochastic
differential equations associated with the study of Navier-Stokes equations.

The 2D Navier-Stokes equations (without external force) are the partial
differential equations which describe the motion of fluids 
\begin{equation}
\begin{array}{cc}
\frac{\partial u}{\partial t}-\nu \Delta u+u\cdot \nabla u+\nabla p=0\text{,}
&  \\ 
\nabla \cdot u=0\text{,} & 
\end{array}
\label{e12-01}
\end{equation}%
where $u=(u^{1},u^{2})$ is the velocity field, $\nu $ the viscosity constant
and $p$ the pressure. The mathematical study of the Navier-Stokes is
interesting by its own, and even the simplest situation where the space
periodic condition is supplied is of interest.

Suppose that $u(0,x)=\varphi (x)$ is a smooth vector field with period one,
that is, $\varphi (x+e_{i})=\varphi (x)$ for all $x\in \mathbb{R}^{2}$,
where $e_{1}=(1,0)$ and $e_{2}=(0,1)$ the standard basis in $\mathbb{R}^{2}$%
. Then, the unique solution $(u,p)$ to the 2D Navier-Stokes equation is
smooth on $(0,\infty )\times \mathbb{R}^{2}$ and periodic in space
variables, so that $u(t,x+e_{i})=u(t,x)$ and $p(t,x+e_{i})=p(t,x)$ for all $%
t>0$, $x\in \mathbb{R}^{2}$, $i=1,2$.

Let 
\begin{equation}
\omega =\frac{\partial u^{2}}{\partial x^{1}}-\frac{\partial u^{1}}{\partial
x^{2}}  \label{e12-03}
\end{equation}%
be the vorticity of $u$, which is a scalar function in dimension two, and
thus the evolution equation for $\omega $ is a scalar partial differential
equation 
\begin{equation}
\frac{\partial \omega }{\partial t}-\nu \Delta \omega +u\cdot \nabla \omega
=0\text{.}  \label{e12-02}
\end{equation}%
Equation (\ref{e12-02}) is called the vorticity equation is equivalent to
the Navier-Stokes equation. The relationship between the scalar function $%
\omega $ and the associated vector field $u$ is determined by the Poisson
equations 
\begin{equation}
\Delta u^{1}=-\frac{\partial \omega }{\partial x^{2}}\text{ \ and \ }\Delta
u^{2}=\frac{\partial \omega }{\partial x^{1}}\text{.}  \label{e12-04}
\end{equation}%
By (\ref{e12-03}) the average of $\omega (t,x)$%
\begin{equation}
\int_{\lbrack 0,1)^{2}}\omega (t,x)dx=0\text{ \ for all }t\geq 0\text{,}
\label{e12-05}
\end{equation}%
so that (\ref{e12-04}) has a unique periodic (with period one) solution $%
u=(u^{1},u^{2})$. We define linear operators $K_{i}:\omega \rightarrow u^{i}$
(where $i=1,2$) and $K:\omega \rightarrow u$ by solving the Poisson
equations (\ref{e12-04}), where $\omega $ is a real function with period one
and mean zero.

Let $\mathbb{T}^{2}=\mathbb{R}^{2}/\mathbb{Z}^{2}$ be the 2D torus equipped
with the standard metric and the Lebesgue measure. We may identify tensor
fields in $\mathbb{R}^{2}$ with period one canonically with the
corresponding tensor fields on $\mathbb{T}^{2}$. For example 
\begin{equation*}
L^{2}(\mathbb{T}^{2})=\{f\in L_{\text{loc}}^{2}(\mathbb{R}^{2}):f(\cdot
+e_{i})=f(\cdot )\text{ for }i=1,2\}\cap L^{2}([0,1)^{2})\text{.}
\end{equation*}
If $f\in L^{2}(\mathbb{T}^{2})$ then 
\begin{equation}
f(x)=\sum_{k\in \mathbb{Z}^{2}}e^{2\pi \sqrt{-1}\langle k,x\rangle }\hat{f}%
(k)  \label{e12-09}
\end{equation}
where 
\begin{equation}
\hat{f}(k)=\int_{[0,1)^{2}}e^{-2\pi \sqrt{-1}\langle k,y\rangle }f(y)dy\text{%
, \ \ \ }k\in \mathbb{Z}^{2}  \label{e12-10}
\end{equation}
is the Fourier transform of $f$, $\langle \cdot ,\cdot \rangle $ denotes the
scalar product in Euclidean spaces.

\begin{lemma}
(Green's formula) Consider the Poisson equation 
\begin{equation}
\Delta g=-f\text{ \ \ \ in }\mathbb{T}^{2}\text{, \ }\int_{\mathbb{T}%
^{2}}g(y)dy=0\text{,}  \label{e12-11}
\end{equation}
where $\int_{\mathbb{T}^{2}}f(y)dy=0$ and $f\in L^{2}(\mathbb{T}^{2})$. Then
the unique solution of the problem (\ref{e12-11}) is given by 
\begin{equation}
g(x)=\sum_{k\in \mathbb{Z}^{2},k\neq 0}\frac{e^{2\pi \sqrt{-1}\langle
k,x\rangle }}{4\pi |k|^{2}}\hat{f}(k)\text{ .}  \label{e12-12}
\end{equation}
\end{lemma}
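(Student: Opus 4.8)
The plan is to prove this by Fourier analysis, exploiting the fact that the Laplacian acts diagonally on the Fourier basis $\{e^{2\pi\sqrt{-1}\langle k,x\rangle}\}_{k\in\mathbb{Z}^2}$. First I would expand both the unknown $g$ and the data $f$ in their Fourier series as in (\ref{e12-09})--(\ref{e12-10}), writing $g(x)=\sum_{k}e^{2\pi\sqrt{-1}\langle k,x\rangle}\hat{g}(k)$ and $f(x)=\sum_{k}e^{2\pi\sqrt{-1}\langle k,x\rangle}\hat{f}(k)$. Applying the Laplacian term by term gives $\Delta e^{2\pi\sqrt{-1}\langle k,x\rangle}=-4\pi^2|k|^2 e^{2\pi\sqrt{-1}\langle k,x\rangle}$, so the equation $\Delta g=-f$ becomes, upon matching Fourier coefficients, the algebraic relation $-4\pi^2|k|^2\hat{g}(k)=-\hat{f}(k)$ for every $k\in\mathbb{Z}^2$. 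For $k\neq 0$ this yields $\hat{g}(k)=\hat{f}(k)/(4\pi^2|k|^2)$.

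Next I would address the two zero-average constraints, which pin down the $k=0$ mode and guarantee solvability. The hypothesis $\int_{\mathbb{T}^2}f\,dy=0$ is exactly the statement $\hat{f}(0)=0$; this is the compatibility condition that makes the $k=0$ equation $0\cdot\hat{g}(0)=\hat{f}(0)$ consistent. The normalization $\int_{\mathbb{T}^2}g\,dy=0$ is precisely $\hat{g}(0)=0$, which fixes the otherwise free constant mode. Substituting $\hat{g}(k)=\hat{f}(k)/(4\pi^2|k|^2)$ for $k\neq 0$ and $\hat{g}(0)=0$ back into the Fourier series for $g$ then produces the stated formula (\ref{e12-12}).

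The main technical point, rather than the formal computation, is justifying convergence and uniqueness rigorously. For uniqueness I would note that if $g_1,g_2$ are two solutions then $h=g_1-g_2$ satisfies $\Delta h=0$ with mean zero; on the torus the only harmonic functions are constants, and the mean-zero condition forces $h=0$, so all zero modes are determined and the solution is unique. The point deserving the most care is that the series (\ref{e12-12}) genuinely defines an element of $L^2(\mathbb{T}^2)$ solving the equation: since $f\in L^2$ gives $\sum_k|\hat{f}(k)|^2<\infty$ by Parseval, the coefficients $\hat{f}(k)/(4\pi^2|k|^2)$ decay even faster (as $|k|\to\infty$ the factor $|k|^{-2}$ only helps), so $\sum_{k\neq 0}|\hat{f}(k)|^2/(4\pi^2|k|^2)^2<\infty$ and the series converges in $L^2$; indeed $g$ lies in the Sobolev space $H^2(\mathbb{T}^2)$, which makes $\Delta g=-f$ an honest $L^2$ identity rather than merely a distributional one.
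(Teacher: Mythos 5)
The paper states this lemma without proof, and your Fourier-diagonalization argument is exactly the proof it implicitly intends (the Fourier series (\ref{e12-09})--(\ref{e12-10}) are set up immediately beforehand for this purpose); your treatment of the $k=0$ mode, uniqueness via harmonic-plus-mean-zero, and the $L^2$/$H^2$ convergence of the series are all correct and complete.

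One concrete point, though: your own computation does \emph{not} reproduce the displayed formula (\ref{e12-12}) literally, and you should not assert that it does. From $\Delta e^{2\pi\sqrt{-1}\langle k,x\rangle}=-4\pi^{2}|k|^{2}e^{2\pi\sqrt{-1}\langle k,x\rangle}$ you correctly obtain $\hat{g}(k)=\hat{f}(k)/(4\pi^{2}|k|^{2})$, so the denominator in the resulting series is $4\pi^{2}|k|^{2}$, whereas (\ref{e12-12}) as printed has $4\pi|k|^{2}$. The paper's constant appears to be a typographical slip that then propagates consistently into (\ref{e12-13})--(\ref{e12-15}) (with the correct constant those become $\frac{\sqrt{-1}}{2\pi}\frac{k_{2}}{|k|^{2}}$ rather than $\frac{\sqrt{-1}}{2}\frac{k_{2}}{|k|^{2}}$, etc.); none of this affects the boundedness of the operators $K_{j}$ or anything downstream, but a careful write-up should either correct the constant or explicitly flag the discrepancy rather than claim agreement with the stated formula.
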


Our first goal is to derive a probabilistic representation for $\omega $ in
terms of Brownian motion. To this end we set up the probability setting with
which we are going to work with. Let $B=(B^{1},B^{2})$ be a standard
Brownian motion on a complete probability space $(\Omega ,\mathcal{F},P)$,
and define 
\begin{equation*}
\begin{array}{ccc}
Y(w,t,x) & = & \omega (T-t,x+\sqrt{2v}B_{t}(w))\text{,} \\ 
Z^{1}(w,t,x) & = & \frac{\partial \omega }{\partial x^{1}}(T-t,x+\sqrt{2v}%
B_{t}(w))\text{,} \\ 
Z^{2}(w,t,x) & = & \frac{\partial \omega }{\partial x^{2}}(T-t,x+\sqrt{2v}%
B_{t}(w))%
\end{array}%
\text{ \ }
\end{equation*}%
for $(w,t,x)\in \Omega \times \lbrack 0,\infty )\times \mathbb{R}^{2}$. We
will often suppress the random element $w$ from our notations, and write $%
Y(t,x)$, $Y_{t}$ or simply by $Y$ for $Y(w,t,x)$, if no confusion is
possible.

Let $\psi =\frac{\partial \varphi _{2}}{\partial x^{1}}-\frac{\partial
\varphi _{1}}{\partial x^{2}}$ be the vorticity of the initial velocity $%
\varphi \equiv u_{0}$, and $\xi (x)=\psi (x+\sqrt{2v}B_{T})$. Then, it is
clear that $\xi $ is smooth and periodic in $x$ (with again period one).
According to It\^{o}'s formula 
\begin{eqnarray}
\xi (x)-Y(t,x) &=&\sqrt{2v}\int_{t}^{T}\langle \nabla \omega (T-s,x+\sqrt{2v}%
B_{s}),dB_{s}\rangle  \notag \\
&&+\int_{t}^{T}\left( -\frac{\partial \omega }{\partial s}+\nu \Delta \omega
\right) (T-s,x+\sqrt{2v}B_{s})ds\text{.}  \label{e12-07}
\end{eqnarray}%
Now, by utilizing the vorticity equation (\ref{e12-02}): substitute $-\frac{%
\partial \omega }{\partial s}+\nu \Delta \omega $ by $u\cdot \nabla \omega $
to obtain 
\begin{equation}
\xi (x)-Y(t,x)=\int_{t}^{T}\langle Z(s,x),X(s,x)\rangle ds+\sqrt{2v}%
\int_{t}^{T}\langle Z(s,x),dB_{s}\rangle  \label{e12-08}
\end{equation}%
where for simplicity we have set 
\begin{equation*}
X(t,x)=u(T-t,x+\sqrt{2v}B_{t})
\end{equation*}%
which is continuous in $t$, smooth in $x$, and periodic in $x$. Next, we
wish to rewrite $X(t,x)$ in terms of $Y$ and $Z$. To this end, it is a good
idea to introduce some notions in Fourier analysis, and establish several
notations which will be used in what follows.

Let us apply Green's formula to the vorticity $\omega $ of $u$. According to
(\ref{e12-04}) and (\ref{e12-12}), we have 
\begin{equation}
u^{1}(x)=\frac{\sqrt{-1}}{2}\sum_{k=(k_{1},k_{2})\in \mathbb{Z}^{2},k\neq 0}%
\frac{k_{2}}{|k|^{2}}e^{2\pi \sqrt{-1}\langle k,x\rangle }\widehat{\omega }%
(k)  \label{e12-13}
\end{equation}
and 
\begin{equation}
u^{2}(x)=-\frac{\sqrt{-1}}{2}\sum_{k=(k_{1},k_{2})\in \mathbb{Z}^{2},k\neq 0}%
\frac{k_{1}}{|k|^{2}}e^{2\pi \sqrt{-1}\langle k,x\rangle }\widehat{\omega }%
(k)\text{.}  \label{e12-14}
\end{equation}
In other words 
\begin{equation}
\widehat{u^{1}}(k)=\frac{\sqrt{-1}}{2}\frac{k_{2}}{|k|^{2}}\widehat{\omega }%
(k)\text{ \ and }\widehat{u^{2}}(k)=-\frac{\sqrt{-1}}{2}\frac{k_{1}}{|k|^{2}}%
\widehat{\omega }(k)\text{, \ \ }k\neq 0\text{.}  \label{e12-15}
\end{equation}
Hence 
\begin{eqnarray}
X^{1}(t,x) &=&u^{1}(T-t,x+\sqrt{2v}B_{t})  \notag \\
&=&\frac{\sqrt{-1}}{2}\sum_{k=(k_{1},k_{2})\in \mathbb{Z}^{2},k\neq 0}\frac{%
k_{2}}{|k|^{2}}e^{2\pi \sqrt{-1}\langle k,x+\sqrt{2v}B_{t}\rangle }\widehat{%
\omega (T-t,\cdot )}(k)\text{.}  \label{e12-17}
\end{eqnarray}

On the other hand 
\begin{eqnarray*}
\widehat{Y(t,\cdot )}(k) &=&\int_{[0,1)^{2}}e^{-2\pi \sqrt{-1}\langle
k,y\rangle }\omega (T-t,y+\sqrt{2v}B_{t})dy \\
&=&e^{2\pi \sqrt{-1}\langle k,\sqrt{2v}B_{t}\rangle }\int_{[0,1)^{2}+\sqrt{2v%
}B_{t}}e^{-2\pi \sqrt{-1}\langle k,y\rangle }\omega (T-t,y)dy \\
&=&e^{2\pi \sqrt{-1}\langle k,\sqrt{2v}B_{t}\rangle
}\int_{[0,1)^{2}}e^{-2\pi \sqrt{-1}\langle k,y\rangle }\omega (T-t,y)dy \\
&=&e^{2\pi \sqrt{-1}\langle k,\sqrt{2v}B_{t}\rangle }\widehat{\omega
(T-t,\cdot )}(k)
\end{eqnarray*}%
here the third equality follows from the fact that $\omega $ is periodic.
Substituting the above equality into (\ref{e12-17}) to obtain 
\begin{equation}
X^{1}(t,x)=\frac{\sqrt{-1}}{2}\sum_{k=(k_{1},k_{2})\in \mathbb{Z}^{2},k\neq
0}\frac{k_{2}}{|k|^{2}}e^{2\pi \sqrt{-1}\langle k,x\rangle }\widehat{%
Y(t,\cdot )}(k)\text{,}  \label{e12-18}
\end{equation}%
and 
\begin{equation}
X^{2}(t,x)=-\frac{\sqrt{-1}}{2}\sum_{k\neq (k_{1},k_{2})\in \mathbb{Z}%
^{2},k\neq 0}\frac{k_{1}}{|k|^{2}}e^{2\pi \sqrt{-1}\langle k,x\rangle }%
\widehat{Y(t,\cdot )}(k)\text{.}  \label{e12-19}
\end{equation}%
By our definition of linear operators $K_{i}$, the previous equations (\ref%
{e12-18}, \ref{e12-19}) may be written as 
\begin{equation}
X^{j}(t,x)=K_{j}(Y(t,\cdot ))(x)\text{ \ \ \ \ \ \ }\forall x\in \mathbb{R}%
^{2}\text{, }j=1,2\text{.}  \label{e12-20}
\end{equation}%
Thanks to (\ref{e12-20}) we may finish our computation for $Y$ as following.
According to (\ref{e12-08}) 
\begin{equation}
\xi (x)-Y(t,x)=\sqrt{2v}\int_{t}^{T}\langle Z(s,x),dB_{s}\rangle
+\int_{t}^{T}\langle Z(s,x),K(Y(s,\cdot ))(x)\rangle ds  \label{e12-21}
\end{equation}%
where $x$ runs through $\mathbb{R}^{2}$.

\section{Feynman-Kac formula for the vorticity}

The preceding stochastic integral equation (\ref{e12-21}) may be put in a
differential form 
\begin{equation}
dY=\langle Z,K(Y)\rangle dt+\sqrt{2v}\langle Z,dB\rangle \text{, \ \ }%
Y_{T}=\xi  \label{b-12-01}
\end{equation}%
where the time space variable $x$, for simplicity, is suppressed. The
initial value problem to the vorticity equation (\ref{e12-02}) is
transferred to a terminal value problem to the stochastic differential
equation (\ref{b-12-01}) within the formulation of BSDEs. In order to derive
a non-linear version of the Feynman-Kac formula for the vorticity $\omega $,
we need to study the infinite dimensional BSDE (\ref{b-12-01}).

BSDE (\ref{b-12-01}) possesses two features which make it difficulty to
study. First, the stochastic equation (\ref{b-12-01}) must be solved in a
function space, so it is an infinite dimensional stochastic differential
equation (with finite dimensional noise). Second, the non-linear term in
this BSDE is quadratic in $Y$ and $Z$, which is the origin of all
difficulties. There are few results in literature about this kind of
backward stochastic differential equations.

Let $B=(B^{1},B^{2})$ be a standard Brownian motion on a complete
probability space $(\Omega ,\mathcal{F},\mathbb{P})$. Let $\mathcal{F}%
_{t}^{0}=\sigma \{B_{s}$: $s\leq t\}$ and $(\mathcal{F}_{t})_{t\geq 0}$ be
the completed continuous filtration associated with $(\mathcal{F}%
_{t}^{0})_{t\geq 0}$. Let $\mathcal{O}$ and $\mathcal{P}$ be the optional
and predictable $\sigma $-fields on $\Omega \times \lbrack 0,\infty )$,
respectively. Let $\mathcal{\tilde{Q}}=\mathcal{O}\times \mathcal{B}(\mathbb{%
R}^{2})$ and $\mathcal{\tilde{P}}=\mathcal{P}\times \mathcal{B}(\mathbb{R}%
^{2})$ be the optional and predictable $\sigma $-algebras on $\Omega \times
\lbrack 0,\infty )\times \mathbb{R}^{2}$. A $\mathcal{\tilde{Q}}$-measurable
(resp. $\mathcal{\tilde{P}}$-measurable) function on $\Omega \times \lbrack
0,\infty )\times \mathbb{R}^{2}$ is called a optional (resp. predictable)
function, or called an $\mathbb{R}^{2}$-valued optional (resp. predictable).
\ We may similarly define $\mathcal{O}\times \mathcal{B}(\mathbb{T}^{2})$
and $\mathcal{P}\times \mathcal{B}(\mathbb{T}^{2})$ which are identified
with elements in the $\mathcal{O}\times \mathcal{B}(\mathbb{R}^{2})$ and $%
\mathcal{P}\times \mathcal{B}(\mathbb{R}^{2})$ respectively which are
periodic in the space variables with period one.

In order to derive a non-linear version of the Feynman-Kac formula for the
vorticity $\omega $ we need to prove the existence and the uniqueness of
solutions to (\ref{b-12-01}) subject to the given terminal value $\xi $.
Actually we will do this for a general terminal value $\xi $ which is not
necessary in a form of $\varphi (B_{T})$.

We will assume that $\xi $ is a \emph{bounded random function} on $\Omega
\times \mathbb{T}^{2}$ which is $\mathcal{F}_{T}\otimes \mathcal{B}(\mathbb{T%
}^{2})$ measurable, and furthermore, we assume that for every $w\in \Omega $%
, $\xi (w,\cdot )\in W^{2,2}(\mathbb{T}^{2})$, and $\int_{\mathbb{T}^{2}}\xi
(w,y)dy=0$ for all $w\in \Omega $. In particular, according to the Sobolev
imbedding, $x\rightarrow \xi (w,x)$ is continuous on $\mathbb{T}^{2}$ for
every $w\in \Omega $.

By a solution to BSDE (\ref{b-12-01}) we mean a pair of $\mathcal{\tilde{P}}$%
-measurable stochastic processes $Y$ and $Z$, such that:

1) For all $x\in \mathbb{T}^{2}$, $(w,t)\rightarrow Y(w,t,x)$ is continuous
semimartingale, and for all $(w,t)\in \Omega \times \lbrack 0,T]$, $%
Y(w,t,\cdot )\in L^{2}(\mathbb{T}^{2})$.

2) For every $x\in \mathbb{T}^{2}$, 
\begin{equation*}
\mathbb{E}\int_{0}^{T}|Z(t,x)|^{2}dt<+\infty
\end{equation*}
so that the It\^{o}'s integral $\int_{0}^{\cdot }\langle Z(\cdot
,x),dB\rangle $ is a square integrable martingale for every $x$.

3) It holds that 
\begin{equation*}
Y(t,x)=\xi (x)-\int_{t}^{T}\langle Z(s,x),K(Y(s,\cdot ))(x)\rangle ds-\sqrt{%
2v}\int_{t}^{T}\langle Z(s,x),dB_{s}\rangle
\end{equation*}
almost surely on $\Omega \times \mathbb{T}^{2}$, for $t\in \lbrack 0,T]$.

Now we are in a position to state our main result.

\begin{theorem}
\label{main-th}Under above assumptions on the terminal value $\xi $, there
is a unique solution pair $(Y,Z)$ to BSDE\ (\ref{b-12-01}) such that

1) $Y$ is bounded on $\Omega \times \lbrack 0,T]\times \mathbb{T}^{2}$, and

2) For almost all $x\in \mathbb{T}^{2}$, the It\^{o} integral $%
\int_{0}^{\cdot }\langle Z(\cdot ,x),dB\rangle $ is a BMO martingale, and 
\begin{equation*}
\text{ess}\sup_{[0,T]\times \Omega }\mathbb{E}\left\{ \left.
\int_{t}^{T}||Z_{s}||^{2}ds\right| \mathcal{F}_{t}\right\} <+\infty
\end{equation*}%
where $||\cdot ||$ denotes the $L^{2}$-norm on $\mathbb{T}^{2}$.
\end{theorem}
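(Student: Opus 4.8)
The plan is to regard the backward equation (\ref{b-12-01}) as a bilinear backward problem whose driver $\langle Z,K(Y)\rangle$ is linear in $Z$ but carries a coefficient $K(Y)$ depending on the entire profile $Y(s,\cdot)$ through the Biot--Savart operator. The first step is to read off from Green's formula (\ref{e12-12}) and (\ref{e12-15}) that the Fourier symbol of $K$ is of order $-1$, so that $K$ maps $W^{2,2}(\mathbb{T}^2)$ (mean zero) boundedly into $W^{3,2}(\mathbb{T}^2)$; by the two--dimensional Sobolev embedding the velocity $u=K(Y)$ is then bounded and Lipschitz whenever $Y(s,\cdot)$ stays bounded in $W^{2,2}$. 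I would also record the algebraic identity $\nabla\cdot K(Y)=0$, immediate from (\ref{e12-15}). A structural point special to this problem, reflecting that the noise enters only through the additive spatial shift $\sqrt{2v}B_t$, is that the martingale integrand $Z$ coincides with the spatial gradient $\nabla_x Y$; granting this, $\langle Z,K(Y)\rangle=K(Y)\cdot\nabla_x Y$ is exactly the advection term, whose spatial integral against $Y$ vanishes by incompressibility. This enstrophy cancellation is the structural fact behind every estimate below.

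The second step is the collection of a priori bounds that any bounded solution must obey. Freezing $u=K(Y)$ and rewriting (\ref{b-12-01}) as $dY=\sqrt{2v}\langle Z,dB+\tfrac{1}{\sqrt{2v}}u\,dt\rangle$, a Girsanov change of measure (legitimate since $u$ is bounded) exhibits $Y(t,x)$ as a conditional expectation of $\xi(x)$, whence the $L^\infty$ bound $\|Y\|_\infty\le\|\xi\|_\infty$ of part 1). Applying It\^o to $Y(t,x)^2$, integrating in $x$ and taking conditional expectations yields, after a Young inequality absorbs the linear--in--$Z$ term against the $2v\|Z\|^2$ produced by the quadratic variation, the estimate $\operatorname{ess\,sup}\mathbb{E}\{\int_t^T\|Z_s\|^2ds\mid\mathcal{F}_t\}<\infty$, i.e. the BMO property in part 2). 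Propagating the spatial $W^{2,2}$ regularity of $Y(t,\cdot)$ — needed to keep $K(Y)$ bounded — is carried out by the corresponding enstrophy and higher energy identities, in which the advection term drops by $\nabla\cdot K(Y)=0$ and the viscous term (\ref{e12-02}) supplies the dissipation.

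For existence I would run a Picard scheme on the \emph{linear} equations $dY^{n+1}=\langle Z^{n+1},K(Y^n)\rangle dt+\sqrt{2v}\langle Z^{n+1},dB\rangle$ with $Y^{n+1}_T=\xi$, each solvable pointwise in $x$ by the Girsanov argument above and inheriting the $L^\infty$, BMO and $W^{2,2}$ bounds uniformly in $n$. These uniform bounds furnish compactness (an Aubin--Lions argument in the space variable together with stability of the BMO estimate), and one passes to the limit in the nonlocal term $\langle Z^n,K(Y^n)\rangle$ using the strong convergence of $Y^n$ and the smoothing of $K$. Uniqueness follows by the same energy method: writing the difference driver as $\langle\delta Z,K(Y)\rangle+\langle Z',K(\delta Y)\rangle$, applying It\^o to $\|\delta Y(t,\cdot)\|^2$ and taking conditional expectations, the first piece is absorbed by the dissipation through Young's inequality, while the second — the genuinely nonlocal term — is controlled by integrating by parts, using $\nabla\cdot K(Y)=0$ together with $Z'=\nabla_x Y'$, the smoothing bound $\|K(\delta Y)\|\le C\|\delta Y\|$ and the boundedness of $Y'$, after which Gronwall's lemma gives $\delta Y\equiv 0$ and then $\delta Z\equiv 0$.

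The step I expect to be the main obstacle is closing these estimates simultaneously against the two competing difficulties: the state is function--valued while the driver is bilinear, so one must keep $K(Y)$ bounded (forcing propagation of the $W^{2,2}$ regularity of $Y(t,\cdot)$) while controlling the quadratic growth in $Z$ only through its BMO norm. Both the uniqueness term $\langle Z',K(\delta Y)\rangle$ and the passage to the limit in $\langle Z^n,K(Y^n)\rangle$ hinge on combining the order $-1$ smoothing of the Biot--Savart operator with the incompressibility cancellation $\nabla\cdot K(Y)=0$, and the delicate point is to establish and exploit the identity $Z=\nabla_x Y$ for a general terminal datum and to implement the cancellation rigorously at the level of the conditional--expectation energy identities, thereby upgrading the resulting deterministic--looking bounds into genuine compactness for the limit.
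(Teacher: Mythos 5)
Your strategy hinges on the structural identity $Z=\nabla_x Y$ and the resulting enstrophy cancellation $\int_{\mathbb{T}^2} Y\,\langle \nabla_x Y,K(Y)\rangle\,dx=0$ via $\nabla\cdot K(Y)=0$. This identity is simply false in the generality of the theorem: the statement is for an arbitrary bounded $\mathcal{F}_T\otimes\mathcal{B}(\mathbb{T}^2)$-measurable terminal field $\xi$, not only for the Markovian datum $\xi(x)=\psi(x+\sqrt{2v}B_T)$. Already for the driftless equation, if $\xi(w,x)=f(x)\,g(B_T(w))$ then $Y(t,x)=f(x)\,\mathbb{E}\{g(B_T)\mid\mathcal{F}_t\}$ has martingale integrand $Z(t,x)=f(x)\,z_t$ coming from the It\^o representation of $\mathbb{E}\{g(B_T)\mid\mathcal{F}_t\}$, whereas $\nabla_x Y(t,x)=\nabla f(x)\,\mathbb{E}\{g(B_T)\mid\mathcal{F}_t\}$; the two are unrelated. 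Since both your uniqueness argument (the treatment of $\langle Z',K(\delta Y)\rangle$) and your propagation of $W^{2,2}$ regularity rest on this cancellation, the core of the proof does not close. The paper never uses incompressibility or any cancellation: it only needs the maximum principle (Girsanov, as in your first estimate, which is fine), the $L^2$ elliptic bound $\|K(f)\|\le C_0\|f\|$, and an a priori BMO bound on $\tilde Z$, and then runs a genuine Banach fixed-point argument for the map $Y\mapsto\tilde Y$ defined by the frozen-coefficient linear BSDE, contracting in the weighted norm $\|e^{\alpha t}Y\|_\infty+\|e^{\alpha t}Z\|_{BMO}$. The term you identify as "genuinely nonlocal" is there absorbed not by cancellation but by choosing $\alpha$ large against the uniform BMO bound.

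A second, independent problem is your existence mechanism. Aubin--Lions compactness is not available here: the iterates $Y^n(\cdot,x)$ are semimartingales whose martingale parts have no time-derivative bounds, and any tightness/Skorokhod-type substitute changes the probability space, destroying adaptedness to the prescribed Brownian filtration, which is essential for a BSDE solution. Passing to the limit in $\langle Z^n,K(Y^n)\rangle$ with only weak convergence of $Z^n$ would also leave you without identification of the limit martingale integrand. The contraction route avoids all of this because it delivers Cauchy convergence of the whole Picard sequence in the norm $\|\cdot\|_{\alpha,\infty}$, in which both the $L^\infty$ limit of $Y_n$ and the BMO limit of $Z_n$ are obtained simultaneously and remain adapted. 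I would recommend you abandon the cancellation/compactness framework and redo the uniqueness and convergence steps with the weighted BMO contraction estimate.
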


In particular, by applying Theorem \ref{main-th} to $\xi =\varphi (B_{T})$
where $\varphi =\nabla \times u_{0}$ is bounded, $C^{2}$ on $\mathbb{T}^{2}$%
, then $Y(t,x)=\omega (T-t,\sqrt{2\nu }B_{t}+x)$, where $(Y,Z)$ is the
unique solution pair of (\ref{b-12-01}) with terminal $\xi =\varphi (B_{T})$
and $\omega $ is the solution to the vorticity equation (\ref{e12-02})
subject to the initial value $u(0,\cdot )=u_{0}$. $Y$ may be regarded as the
probabilistic representation for the vorticity $\omega $.

The proof of Theorem \ref{main-th} relies on two important technical facts.
The first is the $L^{2}$-estimate for the linear operator $K$, and the
secomd is a maximal principle formulated in terms of backward stochastic
differential equations.

\section{Several technical estimates}

In order to prove the main result Theorem \ref{main-th}, we need several a
priori estimates.

\subsection{\textit{Apriori} estimates for $K$}

Let us recall the definition of $K$. Note that we identify tensor fields in
the torus $\mathbb{T}^{2}$ with the tensor fields on $\mathbb{R}^{2}$ with
period one along each space variable. For $k\in \mathbb{Z}_{+}$ and $q\geq 1$
the Sobolev space 
\begin{eqnarray*}
W^{k,q}(\mathbb{T}^{2}) &=&\left\{ f:\partial ^{\alpha }f\in L_{\text{loc}%
}^{q}(\mathbb{R}^{2})\cap L^{q}([0,1)^{2})\text{ for }|\alpha |\leq k\right.
\\
&&\text{ }\left. \text{and }f(\cdot +e_{i})=f(\cdot )\text{ \ for }%
i=1,2\right\}
\end{eqnarray*}
together with the Sobolev norm 
\begin{equation*}
||f||_{k,q}=\left( \sum_{\alpha \in \mathbb{Z}^{2},|\alpha |\leq
k}||\partial ^{\alpha }f||_{q}^{q}\right) ^{1/q}
\end{equation*}
where $||\cdot ||_{q}$ is the $L^{q}$-norm over $\mathbb{T}^{2}$, that is 
\begin{equation*}
||f||_{q}=\left( \int_{\mathbb{T}^{2}}|f|^{q}\right) ^{1/q}=\left(
\int_{[0,1)^{2}}|f(x)|^{q}dx\right) ^{1/q}\text{.}
\end{equation*}
If $q=2$ then we use $||\cdot ||$ instead of $||\cdot ||_{2}$ for simplicity.

According to Sobolev's embedding theorem, $W^{2,2}(\mathbb{T}%
^{2})\hookrightarrow C^{\alpha }(\mathbb{T}^{2})$ for some $\alpha \in (0,1) 
$, so any element in $W^{2,2}(\mathbb{T}^{2})$ has a unique continuous
representation.

If $f\in L^{2}(\mathbb{T}^{2})$ such that $\int_{[0,1)^{2}}f=0$, then $%
K_{j}(f)=g_{j}$ are the unique solutions (with period one) such that $%
\int_{[0,1)^{2}}g_{j}=0$, solving the Poisson equations 
\begin{equation}
\Delta g_{1}=-\frac{\partial f}{\partial x_{2}}\text{, \ }\Delta g_{2}=\frac{%
\partial f}{\partial x_{1}}\text{ \ \ on }\mathbb{T}^{2}\text{.}
\label{e12-23}
\end{equation}
By definition, if $\alpha =(\alpha _{1},\alpha _{2})\in \mathbb{Z}_{+}\times 
\mathbb{Z}_{+}$, then $\partial ^{\alpha }K_{j}(f)=K_{j}(\partial ^{\alpha
}f)$, where $\partial ^{\alpha }$ stands for the partial derivative $\frac{%
\partial ^{|\alpha |}}{\partial x_{1}^{\alpha _{1}}\partial x_{2}^{\alpha
_{2}}}$ for simplicity as long as $\partial ^{\alpha }f\in L^{2}(\mathbb{T}%
^{2})$.

On the other hand 
\begin{equation*}
\int_{\mathbb{T}^{2}}|\nabla g_{j}|^{2}=-\int_{\mathbb{T}^{2}}g_{j}\Delta
g_{j}=\int_{\mathbb{T}^{2}}g_{1}\frac{\partial f}{\partial x_{2}}\text{ or }%
-\int_{\mathbb{T}^{2}}g_{2}\frac{\partial f}{\partial x_{1}}
\end{equation*}%
according to $j=1$ or $j=2$. Integrating by parts together with
Cauchy-Schwartz inequality to the last integrals we deduce that 
\begin{equation*}
\int_{\mathbb{T}^{2}}|\nabla g_{j}|^{2}\leq \sqrt{\int_{\mathbb{T}%
^{2}}|\nabla g_{j}|^{2}}\sqrt{\int_{\mathbb{T}^{2}}|f|^{2}}
\end{equation*}%
which yields that 
\begin{equation}
||\nabla K_{j}(f)||\leq ||f||\text{, \ \ \ \ }j=1,2\text{.}  \label{e12-24}
\end{equation}%
Let $\lambda _{1}>0$ be the spectral gap for the torus $\mathbb{T}^{2}$.
Since $\int_{\mathbb{T}^{2}}K_{j}(f)=0$, according to the Poincar\'{e}
inequality 
\begin{equation}
||K_{j}(f)||\leq \frac{1}{\sqrt{\lambda _{1}}}||\nabla K_{j}(f)||\leq \frac{1%
}{\sqrt{\lambda _{1}}}||f||\text{.}  \label{e12-25}
\end{equation}%
Therefore we have the following elliptic estimate (for more details see for
example \cite{MR0125307}, \cite{MR0162050} and \cite{MR0202511}).

\begin{lemma}
There is a universal constant $C_{0}>0$ such that 
\begin{equation*}
||K_{j}(f)||_{k,2}\leq C_{0}||f||_{k-1,2}
\end{equation*}
for any $f\in W^{k-1,2}(\mathbb{T}^{2})$ with $\int_{\mathbb{T}^{2}}f=0$,
where $k\in \mathbb{N}$.
\end{lemma}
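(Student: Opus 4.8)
The plan is to reduce the estimate to the two bounds already in hand — the gradient bound $||\nabla K_{j}(h)||\le ||h||$ from (\ref{e12-24}) and the Poincar\'{e} bound $||K_{j}(h)||\le \frac{1}{\sqrt{\lambda _{1}}}||h||$ from (\ref{e12-25}) — together with the commutation relation $\partial ^{\alpha }K_{j}(f)=K_{j}(\partial ^{\alpha }f)$. The underlying mechanism is that $K_{j}$ is a smoothing operator of order one: its Fourier symbol $\frac{\sqrt{-1}}{2}\,n_{i}/|n|^{2}$ from (\ref{e12-15}) decays like $|n|^{-1}$, so $K_{j}$ should convert control over $k-1$ derivatives of $f$ into control over $k$ derivatives of $K_{j}(f)$. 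One could argue directly by Plancherel from the pointwise symbol bound $|n_{i}|/|n|^{2}\le |n|^{-1}$, but staying closer to the estimates just derived I would expand the Sobolev norm and treat each term separately.

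Concretely, I would first write
\[
||K_{j}(f)||_{k,2}^{2}=\sum_{|\alpha |\le k}||\partial ^{\alpha }K_{j}(f)||^{2}.
\]
The single term $\alpha =0$ is handled by the Poincar\'{e} estimate (\ref{e12-25}), giving $||K_{j}(f)||^{2}\le \frac{1}{\lambda _{1}}||f||^{2}$. For each $\alpha $ with $1\le |\alpha |\le k$ I factor off one derivative, $\partial ^{\alpha }=\partial _{i}\partial ^{\alpha ^{\prime }}$ with $|\alpha ^{\prime }|=|\alpha |-1\le k-1$, and estimate
\[
||\partial ^{\alpha }K_{j}(f)||=||\partial _{i}\,\partial ^{\alpha ^{\prime }}K_{j}(f)||=||\partial _{i}K_{j}(\partial ^{\alpha ^{\prime }}f)||\le ||\nabla K_{j}(\partial ^{\alpha ^{\prime }}f)||\le ||\partial ^{\alpha ^{\prime }}f||,
\]
where the middle equality invokes the commutation relation only at order $|\alpha ^{\prime }|\le k-1$ (where it is justified, since $\partial ^{\alpha ^{\prime }}f\in L^{2}$) and the last inequality is (\ref{e12-24}) applied to $h=\partial ^{\alpha ^{\prime }}f$. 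Crucially, this never requires $\partial ^{\alpha }f\in L^{2}$ for $|\alpha |=k$; it uses only $\partial ^{\alpha ^{\prime }}f\in L^{2}$ for $|\alpha ^{\prime }|\le k-1$, which is exactly the hypothesis $f\in W^{k-1,2}(\mathbb{T}^{2})$, and simultaneously it shows the top-order derivatives of $K_{j}(f)$ lie in $L^{2}$, i.e.\ $K_{j}(f)\in W^{k,2}(\mathbb{T}^{2})$. One also notes that $\int_{\mathbb{T}^{2}}\partial ^{\alpha }f=0$ for every $|\alpha |\ge 1$ by periodicity, so $K_{j}$ and both base estimates genuinely apply.

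Summing these bounds gives $||K_{j}(f)||_{k,2}^{2}\le \frac{1}{\lambda _{1}}||f||^{2}+\sum_{1\le |\alpha |\le k}||\partial ^{\alpha ^{\prime }}f||^{2}$, and the only remaining point — the closest thing to an obstacle — is a combinatorial one that must be handled so as to keep the constant independent of $k$. Fixing a reduction rule $\alpha \mapsto \alpha ^{\prime }$ (subtract $e_{1}$ if $\alpha _{1}\ge 1$, otherwise $e_{2}$), each multi-index $\beta $ with $|\beta |\le k-1$ arises as $\alpha ^{\prime }$ for at most two $\alpha $'s, namely a subset of $\{\beta +e_{1},\beta +e_{2}\}$, since we are in dimension two. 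Hence the last sum is at most $2||f||_{k-1,2}^{2}$ with multiplicity bounded independently of $k$, so $||K_{j}(f)||_{k,2}^{2}\le \left( \frac{1}{\lambda _{1}}+2\right) ||f||_{k-1,2}^{2}$ and one may take $C_{0}=\sqrt{\frac{1}{\lambda _{1}}+2}$, which depends only on the torus and not on $f$ or $k$. This bounded multiplicity is precisely what the order-one gain of $K_{j}$ buys, and it is what makes the word ``universal'' legitimate.
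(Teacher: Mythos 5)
Your argument is correct and follows exactly the route the paper intends: it assembles the lemma from the gradient bound (\ref{e12-24}), the Poincar\'{e} bound (\ref{e12-25}), and the commutation relation $\partial^{\alpha}K_{j}(f)=K_{j}(\partial^{\alpha}f)$, which are precisely the ingredients the paper derives before stating the lemma and deferring the remaining details to the elliptic-regularity references. Your explicit handling of the multiplicity in the reduction $\alpha\mapsto\alpha'$ and of the mean-zero condition for $\partial^{\alpha'}f$ supplies the details the paper omits, and the resulting constant $C_{0}=\sqrt{1/\lambda_{1}+2}$ is indeed universal.
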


In particular, if $f\in W^{1,2}(\mathbb{T}^{2})$, $K(f)$ is $\alpha $-H\"{o}%
lder continuous.

\subsection{A maximum principle}

Let us formulate a probabilistic version of the maximum principle in terms
of BSDE.

\begin{lemma}
\label{lem-max-1}Suppose $y$ is a continuous semimartingale such that 
\begin{equation*}
y_{t}=y_{T}-\int_{t}^{T}\langle h,z\rangle ds-\int_{t}^{T}\langle
z,dB\rangle \text{ \ \ for }t\in \lbrack 0,T]\text{,}
\end{equation*}
where $y_{T}$ is a bounded $\mathcal{F}_{T}$-measurable random variable,
both $z$ and $h$ are $\mathbb{R}^{d}$-valued predictable processes such that 
\begin{equation*}
\mathbb{E}\int_{0}^{T}|z|^{2}dt<\infty
\end{equation*}
and suppose that 
\begin{equation*}
R_{t}=\exp \left[ -\int_{0}^{t}\langle h,dB\rangle -\frac{1}{2}%
\int_{0}^{t}|h|^{2}ds\right]
\end{equation*}
is a martingale up to $T$. Then $|y_{t}|_{\infty }\leq |y_{T}|_{\infty }$
for all $t\in \lbrack 0,T]$ almost surely.
\end{lemma}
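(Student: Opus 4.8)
The statement is a probabilistic maximum principle: the backward equation
$$
y_t = y_T - \int_t^T \langle h, z\rangle\, ds - \int_t^T \langle z, dB\rangle
$$
has a linear driver $\langle h, z\rangle$, and the claim is that the sup-norm of $y_t$ is dominated by that of the terminal data $y_T$. The natural approach is a Girsanov change of measure that absorbs the drift term $\langle h, z\rangle\, ds$ into the martingale part, turning $y$ into a martingale under a new measure $\mathbb{Q}$, after which the bound follows from the martingale property together with the conditional Jensen / tower property.

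\textbf{Step 1: change of measure.} The plan is to use the hypothesis that $R_t=\exp[-\int_0^t\langle h,dB\rangle-\tfrac12\int_0^t|h|^2\,ds]$ is a genuine martingale up to $T$ (not merely a local/supermartingale); this is exactly the Novikov-type condition the lemma assumes rather than proves. Define $\mathbb{Q}$ on $\mathcal{F}_T$ by $d\mathbb{Q}=R_T\,d\mathbb{P}$. By Girsanov's theorem, $\tilde B_t := B_t + \int_0^t h\,ds$ is a $\mathbb{Q}$-Brownian motion. Substituting $dB = d\tilde B - h\,ds$ into the equation, the finite-variation term $\langle h,z\rangle\,ds$ cancels, leaving
$$
y_t = y_T - \int_t^T \langle z, d\tilde B_s\rangle.
$$

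\textbf{Step 2: martingale representation and the bound.} Under $\mathbb{Q}$, the process $\int_0^{\cdot}\langle z,d\tilde B\rangle$ is (at least) a local martingale, so $y_t = \mathbb{E}_{\mathbb{Q}}[y_T\mid\mathcal{F}_t]$ once one checks genuine integrability. I would verify the martingale (not just local-martingale) property by noting $y_T$ is bounded, hence in every $L^p(\mathbb{Q})$, and use this to upgrade the local martingale to a true $\mathbb{Q}$-martingale; then conditional Jensen gives $|y_t| = |\mathbb{E}_{\mathbb{Q}}[y_T\mid\mathcal{F}_t]| \le \mathbb{E}_{\mathbb{Q}}[|y_T|\mid\mathcal{F}_t] \le |y_T|_\infty$ a.s., and since $|y_T|_\infty$ is a deterministic constant the right-hand side is $\le |y_T|_\infty$, which is the claim. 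Because $\mathbb{Q}$ and $\mathbb{P}$ are mutually absolutely continuous, the a.s. bound transfers back to $\mathbb{P}$.

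\textbf{Main obstacle.} The delicate point is integrability/uniform integrability needed to conclude that $y$ is a true $\mathbb{Q}$-martingale and that the conditional expectation is legitimate. The stochastic integral $\int\langle z,d\tilde B\rangle$ is controlled in $L^2(\mathbb{P})$ by hypothesis, but after the measure change one must re-examine square-integrability under $\mathbb{Q}$, where the density $R_T$ enters; here the martingale assumption on $R$ is what keeps the change of measure honest and lets the boundedness of $y_T$ carry the argument. An alternative, measure-change-free route would apply It\^o's formula to a smooth convex approximation of $|y_t - c|$ (or to $(y_t - c)^+$ via a Tanaka-type argument) for a suitable constant $c$; I would keep this in reserve, but the Girsanov argument is cleaner and exploits the stated hypotheses directly.
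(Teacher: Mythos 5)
Your proposal is correct and follows exactly the paper's own argument: a Girsanov change of measure via the density $R_T$, after which $y_t=\mathbb{E}^{\mathbb{Q}}\{y_T\mid\mathcal{F}_t\}$ and the bound follows from the conditional expectation of the bounded terminal value. Your additional attention to upgrading the local martingale to a true $\mathbb{Q}$-martingale is a point the paper glosses over, but the route is the same.
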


\begin{proof}
Define a probability $\mathbb{Q}$ with density $R$. Then $\mathbb{P}$ is
equivalent to $\mathbb{Q}$ on $\mathcal{F}_{T}$, and according to Girsanov's
theorem $\tilde{B}_{t}=B_{t}+\int_{0}^{t}h_{s}ds$ is a standard Brownian
motion, and 
\begin{equation*}
y_{t}-y_{T}=-\int_{t}^{T}\langle z,d\tilde{B}\rangle \text{.}
\end{equation*}
Therefore $y_{t}=\mathbb{E}^{\mathbb{Q}}\left\{ y_{T}|\mathcal{F}%
_{t}\right\} $ so that $|y_{t}|_{\infty }\leq |y_{T}|_{\infty }$ almost
surely.
\end{proof}

\subsection{A linear BSDE}

Let us consider the following linear BSDE 
\begin{equation}
dY=\langle Z,h\rangle dt+\langle Z,dB\rangle \text{, \ }Y_{T}=\xi
\label{l-12-24-1}
\end{equation}%
with $h\in \mathcal{\tilde{Q}}$ is a given optional process (valued in $%
\mathbb{T}^{2}$) such that for each $(w,t)\in \Omega \times \lbrack 0,T]$, $%
h(w,t,\cdot )\in C(\mathbb{T}^{2})$ and 
\begin{equation}
\mathbb{E}\int_{0}^{T}\left| h(t,x)\right| ^{2}dt<\infty \text{ \ \ }\forall
x\in \mathbb{T}^{2}\text{.}  \label{l-12-24-5}
\end{equation}%
$\xi $ is the terminal value: 
\begin{equation*}
\xi \in L^{\infty }(\Omega \times \mathbb{T}^{2})\cap L^{\infty }(\Omega ,%
\mathcal{F}_{T},W^{2,2}(\mathbb{T}^{2}))\text{.}
\end{equation*}%
The linear equation (\ref{l-12-24-1}) may be solved for every $x\in \mathbb{T%
}^{2}$, and in fact we may solve the linear BSDE 
\begin{equation}
\begin{array}{ccc}
dY(t,x) & = & \langle Z(t,x),h(t,x)\rangle dt+\langle Z(t,x),dB_{t}\rangle 
\text{,} \\ 
Y(T,x) & = & \xi (x)\text{, \ \ \ \ \ \ \ \ \ \ \ \ \ \ \ \ \ \ \ \ \ \ \ }%
\end{array}
\label{l-12-24-3}
\end{equation}%
by means of changing probability. More precisely, for each $x\in \mathbb{T}%
^{2}$, since (\ref{l-12-24-5}) holds, we can define a probability $\mathbb{Q}%
^{x}$ on $\mathcal{F}_{T}$ by $\frac{d\mathbb{Q}^{x}}{d\mathbb{P}}=R(T,x)$,
where 
\begin{equation*}
R(t,x)=\exp \left[ -\int_{0}^{t}\langle h(s,x),dB_{s}\rangle -\frac{1}{2}%
\int_{0}^{t}|h(s,x)|^{2}ds\right] \text{.}
\end{equation*}%
If $(Y(\cdot ,x),Z(\cdot ,x))$ is the unique solution of (\ref{l-12-24-3}),
then, according to the Girsanov theorem, $Y(\cdot ,x)$ must be a martingale
under the new probability $\mathbb{Q}^{x}$, we therefore have 
\begin{equation*}
Y(t,x)=\mathbb{E}^{\mathbb{Q}^{x}}\left\{ \xi (x)|\mathcal{F}_{t}\right\}
\end{equation*}%
which implies that 
\begin{equation*}
Y(t,x)=\mathbb{E}\left\{ \left. \frac{R(T,x)}{R(t,x)}\xi (x)\right| \mathcal{%
F}_{t}\right\} \text{ \ \ }
\end{equation*}%
for $(t,x)\in \lbrack 0,T]\times \mathbb{T}^{2}$. Therefore we have
established the following

\begin{lemma}
\label{lem-q1} Suppose that $\xi $ is $W^{2,2}(\mathbb{T}^{2})$-valued $%
\mathcal{F}_{T}$-measurable random variable, and suppose that $h$ is a $%
W^{2,2}(\mathbb{T}^{2})$-valued adapted stochastic process satisfying (\ref%
{l-12-24-5}), then the unique solution to (\ref{l-12-24-1}) is given by 
\begin{equation}
Y(t,x)=\mathbb{E}\left\{ \left. \xi (x)e^{-\int_{t}^{T}\langle
h(s,x),dB_{s}\rangle -\frac{1}{2}\int_{t}^{T}|h(s,x)|^{2}ds}\right| \mathcal{%
F}_{t}\right\} \text{ \ \ }  \label{l-12-24-6}
\end{equation}
for $t\in \lbrack 0,T]\times \mathbb{T}^{2}$.
\end{lemma}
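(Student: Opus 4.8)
The plan is to solve the linear BSDE (\ref{l-12-24-1}) separately for each fixed $x\in\mathbb{T}^2$ by a Girsanov change of probability, exactly as the preceding derivation anticipates, and then to rewrite the resulting $\mathbb{Q}^x$-conditional expectation under $\mathbb{P}$. Fix $x$. Because $\xi(w,\cdot)\in W^{2,2}(\mathbb{T}^2)$, Sobolev embedding makes $\xi(w,\cdot)$ continuous, so $\xi(\cdot,x)$ is a bounded $\mathcal{F}_T$-measurable scalar random variable; together with the square-integrability (\ref{l-12-24-5}) this places (\ref{l-12-24-3}) among the linear BSDEs with bounded terminal datum and driver $\langle z,h(\cdot,x)\rangle$ linear in $z$, for which a unique adapted pair $(Y(\cdot,x),Z(\cdot,x))$ with $\mathbb{E}\int_0^T|Z(s,x)|^2\,ds<\infty$ exists by the classical theory of Pardoux and Peng.

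First I would introduce the exponential $R(t,x)$ and set $d\mathbb{Q}^x/d\mathbb{P}=R(T,x)$. Granting that $R(\cdot,x)$ is a genuine martingale on $[0,T]$, Girsanov's theorem gives that $\tilde B_t=B_t+\int_0^t h(s,x)\,ds$ is a $\mathbb{Q}^x$-Brownian motion, and substituting $dB=d\tilde B-h\,dt$ into (\ref{l-12-24-3}) cancels the drift, leaving $dY(t,x)=\langle Z(t,x),d\tilde B_t\rangle$. Hence $Y(\cdot,x)$ is a $\mathbb{Q}^x$-martingale and $Y(t,x)=\mathbb{E}^{\mathbb{Q}^x}\{\xi(x)\mid\mathcal{F}_t\}$. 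The abstract Bayes formula turns this into $Y(t,x)=\mathbb{E}\{R(T,x)\xi(x)\mid\mathcal{F}_t\}/R(t,x)$, and since $R(t,x)$ is $\mathcal{F}_t$-measurable we may pull it inside to obtain $Y(t,x)=\mathbb{E}\{\tfrac{R(T,x)}{R(t,x)}\xi(x)\mid\mathcal{F}_t\}$; computing $R(T,x)/R(t,x)=\exp[-\int_t^T\langle h(s,x),dB_s\rangle-\tfrac12\int_t^T|h(s,x)|^2\,ds]$ gives precisely (\ref{l-12-24-6}). Uniqueness is immediate, since any solution is, under $\mathbb{Q}^x$, the martingale closed by $\xi(x)$.

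The step requiring genuine care is verifying that $R(\cdot,x)$ is a true martingale, equivalently that $\mathbb{Q}^x$ is a probability measure, since (\ref{l-12-24-5}) alone does not furnish Novikov's condition $\mathbb{E}\exp(\tfrac12\int_0^T|h(s,x)|^2\,ds)<\infty$. I would obtain it from the extra structure at hand: the $W^{2,2}$-regularity of $h(w,\cdot)$ keeps $h(w,t,\cdot)$ continuous, and in the application the martingale $\int_0^\cdot\langle h(s,x),dB_s\rangle$ is BMO, so Kazamaki's criterion makes $R(\cdot,x)$ a uniformly integrable martingale. As a fallback that avoids the martingale property under the bare hypotheses, one argues directly: $R(\cdot,x)$ is a nonnegative local martingale, hence a supermartingale with $\mathbb{E}R(T,x)\le 1$, so $\xi(x)R(T,x)\in L^1$; setting $M_t=\mathbb{E}\{\xi(x)R(T,x)\mid\mathcal{F}_t\}$, representing $M$ as an It\^{o} integral $\int_0^\cdot\langle\zeta,dB\rangle$, and defining $Y(t,x)=M_t/R(t,x)$, the product rule recovers both the formula (\ref{l-12-24-6}) and the integrand $Z=Yh+\zeta/R$, establishing existence with no appeal to $\mathbb{Q}^x$.
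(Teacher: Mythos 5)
Your argument is essentially the paper's own proof: fix $x$, define $R(t,x)$ and the measure $\mathbb{Q}^{x}$, apply Girsanov so that $Y(\cdot ,x)$ becomes a $\mathbb{Q}^{x}$-martingale closed by $\xi (x)$, and convert back to $\mathbb{P}$ via the Bayes formula to get (\ref{l-12-24-6}). You are in fact more careful than the paper on the one delicate point --- the paper simply asserts that (\ref{l-12-24-5}) lets one define $\mathbb{Q}^{x}$, whereas you correctly observe that square-integrability alone does not make $R(\cdot ,x)$ a true martingale and supply both a Kazamaki/BMO justification and a direct product-rule construction that bypasses the change of measure entirely.
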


It is clear that 
\begin{eqnarray*}
\partial _{j}Y(t,x) &=&\mathbb{E}\left\{ \left. \partial _{j}\xi (x)\frac{%
R(T,x)}{R(t,x)}\right| \mathcal{F}_{t}\right\} \\
&&+\mathbb{E}\left\{ \left. \xi (x)\left( -\int_{t}^{T}\langle \partial
_{j}h(s,x),dB_{s}\rangle -\int_{t}^{T}\langle h(s,x),\partial
_{j}h(s,x)\rangle ds\right) \frac{R(T,x)}{R(t,x)}\right| \mathcal{F}%
_{t}\right\}
\end{eqnarray*}%
so we have the following simple fact.

\begin{corollary}
\label{lem-dec1}1) If in addition $\xi $ and $h$ are bounded, then the
solution $Y$ is continuous in $(t,x)$. \ 2) If in addition $\xi $ and $h$
have bounded derivatives in $x$, then so is $Y$.
\end{corollary}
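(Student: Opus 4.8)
The plan is to read everything off the explicit representation in Lemma \ref{lem-q1}. Write $M(t,x)=R(T,x)/R(t,x)=\exp[-\int_t^T\langle h(s,x),dB_s\rangle-\tfrac12\int_t^T|h(s,x)|^2ds]$, so that $Y(t,x)=\mathbb{E}[\xi(x)M(t,x)\,|\,\mathcal{F}_t]$ and, because $R(\cdot,x)$ is a martingale, $\mathbb{E}[M(t,x)\,|\,\mathcal{F}_t]=1$. Since $h$ is bounded, $M(t,x)$ has moments of every order bounded uniformly in $(t,x)$; this uniform integrability is what will let me move limits and derivatives through the conditional expectation.

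For part 1, I would first treat the two variables separately. Continuity in $t$ for fixed $x$ is free: by Lemma \ref{lem-q1}, $Y(\cdot,x)=\mathbb{E}^{\mathbb{Q}^x}[\xi(x)\,|\,\mathcal{F}_\cdot]$ is a martingale on the Brownian filtration, hence has continuous paths. Continuity in $x$ I would get from dominated convergence: if $x_n\to x$ then $h(w,s,x_n)\to h(w,s,x)$ pointwise (as $h(w,s,\cdot)\in C(\mathbb{T}^2)$) and $|h|$ is bounded, so $\mathbb{E}\int_0^T|h(s,x_n)-h(s,x)|^2ds\to0$; by the It\^o isometry the stochastic integrals converge in $L^2$, whence $M(t,x_n)\to M(t,x)$ in every $L^p$, and (using continuity and boundedness of $\xi$) $\xi(x_n)M(t,x_n)\to\xi(x)M(t,x)$ in $L^1$. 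Since conditional expectation is an $L^1$-contraction, $Y(t,x_n)\to Y(t,x)$. The genuine difficulty is joint continuity, because the conditioning $\sigma$-field $\mathcal{F}_t$ moves with $t$; I would overcome it by decomposing $Y(t,x)=N(t,x)/R(t,x)$ with the martingale $N(t,x)=\mathbb{E}[\xi(x)R(T,x)\,|\,\mathcal{F}_t]$ and applying Doob's maximal inequality to the martingale difference $N(\cdot,x_n)-N(\cdot,x)$. This upgrades the above $L^1$ convergence to $\mathbb{E}\sup_{t\le T}|N(t,x_n)-N(t,x)|^2\to0$, i.e. continuity in $x$ uniformly in $t$; combined with the strict positivity and analogous control of $R(t,x)$ this yields, along a.s.-convergent subsequences over a countable dense set of $x$, a modification for which $x\mapsto Y(\cdot,x)$ is continuous into $C([0,T])$, and joint continuity follows.

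For part 2 I would differentiate the representation (\ref{l-12-24-6}) in $x$. Formally $\partial_jM(t,x)=M(t,x)\big(-\int_t^T\langle\partial_jh,dB_s\rangle-\int_t^T\langle h,\partial_jh\rangle ds\big)$, which gives exactly the displayed formula for $\partial_jY$. The work is to justify interchanging $\partial_j$ with both the stochastic integral and the conditional expectation: the parameter-dependent integral $\int_0^t\langle h(s,x),dB_s\rangle$ is differentiable in $x$ with derivative $\int_0^t\langle\partial_jh(s,x),dB_s\rangle$ (standard, since $\partial_jh$ is bounded and continuous), and the difference quotients $\epsilon^{-1}[\xi(x+\epsilon e_j)M(t,x+\epsilon e_j)-\xi(x)M(t,x)]$ are, by the mean value theorem together with the uniform moment bounds on $M$ and on $\int_t^T\langle\partial_jh,dB\rangle+\int_t^T\langle h,\partial_jh\rangle ds$, dominated in $L^{1+\delta}$; dominated convergence then moves the derivative through $\mathbb{E}[\,\cdot\,|\mathcal{F}_t]$.

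Finally, the uniform bound is easy once the formula is in hand. The first term is controlled by $\|\partial_j\xi\|_\infty\,\mathbb{E}[M(t,x)\,|\,\mathcal{F}_t]=\|\partial_j\xi\|_\infty$. For the second term I would change measure: under $\mathbb{Q}^x$ the process $\tilde B_t=B_t+\int_0^t h\,ds$ is a Brownian motion and the two integrals collapse to $-\int_t^T\langle\partial_jh,d\tilde B\rangle$, so Cauchy--Schwarz under $\mathbb{Q}^x$ gives $|\text{second term}|\le\|\xi\|_\infty\,(\mathbb{E}^{\mathbb{Q}^x}[\int_t^T|\partial_jh|^2ds\,|\,\mathcal{F}_t])^{1/2}\le\sqrt{T}\,\|\xi\|_\infty\|\partial_jh\|_\infty$. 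Hence $|\partial_jY(t,x)|\le\|\partial_j\xi\|_\infty+\sqrt{T}\,\|\xi\|_\infty\|\partial_jh\|_\infty$, uniformly in $(t,x,w)$, which is the assertion. I expect the joint-continuity step of part 1 to be the main obstacle, since $h$ is only assumed continuous (not H\"older) in $x$, so Kolmogorov's criterion is unavailable and one must instead route the $x$-regularity through Doob's inequality as above.
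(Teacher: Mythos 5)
Your overall strategy --- reading both assertions off the explicit representation (\ref{l-12-24-6}) --- is exactly the paper's: its entire justification consists of the displayed formula for $\partial _{j}Y$ obtained by formally differentiating $Y(t,x)=\mathbb{E}\{\xi (x)R(T,x)/R(t,x)|\mathcal{F}_{t}\}$, after which the corollary is declared a ``simple fact.'' Your part 2 is that same computation, carried further than the paper in two useful ways: you indicate how to justify interchanging $\partial _{j}$ with the stochastic integral and the conditional expectation, and your Girsanov-plus-Cauchy--Schwarz estimate $|\partial _{j}Y|\leq \Vert \partial _{j}\xi \Vert _{\infty }+\sqrt{T}\,\Vert \xi \Vert _{\infty }\Vert \partial _{j}h\Vert _{\infty }$ is a clean, correct way to obtain the uniform bound. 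The $t$-continuity and the $x$-continuity in $L^{1}$ for fixed $t$ in part 1 are also fine.

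The genuine gap is where you yourself predicted it: the passage from $x$-continuity in $L^{2}$ to joint almost-sure continuity. Doob's inequality gives $\mathbb{E}\sup_{t\leq T}|N(t,x_{n})-N(t,x)|^{2}\rightarrow 0$, i.e.\ continuity of $x\mapsto N(\cdot ,x)$ as a map into $L^{2}(\Omega ;C([0,T]))$; but continuity in $L^{2}$, even uniformly in $t$, does not yield an almost surely continuous modification. Extracting a.s.-convergent subsequences produces, for each limit point $x$ and each approximating sequence, convergence off a null set depending on that pair, and these null sets cannot be patched into a single full-measure event on which $x\mapsto N(\cdot ,x)$ is uniformly continuous on a countable dense set --- compare a Poisson process, which is $L^{2}$-continuous in its parameter yet has no continuous version. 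A quantitative modulus is indispensable, and, contrary to your closing remark, the standing hypotheses of Lemma \ref{lem-q1} supply one: the data are $W^{2,2}(\mathbb{T}^{2})$-valued and $W^{2,2}(\mathbb{T}^{2})\hookrightarrow C^{\alpha }(\mathbb{T}^{2})$, so $\xi (w,\cdot )$ is uniformly H\"{o}lder (its $W^{2,2}$ norm is essentially bounded) and $h(w,s,\cdot )$ is H\"{o}lder for each $(w,s)$. Granting a uniform bound on $\Vert h(w,s,\cdot )\Vert _{2,2}$ --- which holds in the intended application, where $h=K(Y)$ with $Y$ bounded and $K$ smoothing --- the Burkholder--Davis--Gundy inequality combined with your Doob step gives $\mathbb{E}\sup_{t}|N(t,x)-N(t,y)|^{2p}\leq C_{p}|x-y|^{2p\alpha }$, and Kolmogorov's continuity criterion for the $C([0,T])$-valued field $x\mapsto N(\cdot ,x)$ (take $p>1/\alpha $, since $\mathbb{T}^{2}$ is two-dimensional) closes the argument. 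So the missing ingredient is precisely the H\"{o}lder modulus furnished by the Sobolev embedding, not a different decomposition; the rest of your plan stands.
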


\section{Proof of Theorem \ref{main-th}}

This section is devoted to the proof of Theorem \ref{main-th}. We will use
the following convention in our proof. The elliptic estimates show that if $%
f\in W^{k,2}(\mathbb{T}^{2})$ then $K(f)\in W^{k+1,2}(\mathbb{T}^{2})$, thus
if $k\geq 1$, then, according to the Sobolev imbedding, $K(f)$ has a H\"{o}%
lder continuous version. Therefore, if $f\in W^{1,2}(\mathbb{T}^{2})$ for $%
k\geq 1$, $K(f)$ is always chosen to be its continuous version.

Let $\mathcal{H}$ denote the collection of all bounded $\mathcal{\tilde{P}}$%
-measurable stochastic processes $Y$ on $\Omega \times \lbrack 0,T]\times 
\mathbb{T}^{2}$ satisfying the following conditions:

1) For each $x\in \mathbb{T}^{2}$, $Y(\cdot ,x)$ is a continuous
semimartingale (up to time $T$) on $(\Omega ,\mathcal{F},\mathcal{F}_{t},%
\mathbb{P})$ whose martingale part $M$ (with $M_{0}=0$) is a BMO martingale,
and $Y_{T}=\xi $. Moreover, for every $(w,t)\in \Omega \times \lbrack 0,T]$, 
$Y(w,t,\cdot )\in W^{2,2}(\mathbb{T}^{2})$.

2) Let the It\^{o} representation of the martingale part 
\begin{equation*}
M(t,x)=\int_{0}^{t}\langle Z(t,x),dB_{t}\rangle
\end{equation*}
where $Z$ is $\mathcal{\tilde{P}}$-measurable. Then 
\begin{equation*}
\text{ess}\sup_{[0,T]\times \Omega }\mathbb{E}\left\{ \left.
\int_{t}^{T}||Z_{s}||^{2}ds\right| \mathcal{F}_{t}\right\} <+\infty \text{.}
\end{equation*}

Let $Y\in \mathcal{H}$, we define $\mathcal{L}(Y)=\tilde{Y}$ by solving the
following \emph{linear} backward stochastic differential equation 
\begin{equation}
d\tilde{Y}(t,x)=\langle \tilde{Z}(t,x),K(Y(t,\cdot ))(x)\rangle dt+\sqrt{2v}%
\langle \tilde{Z}(t,x),dB_{t}\rangle \text{, }\tilde{Y}(T,x)=\xi (x)\text{.}
\label{7-9-e2}
\end{equation}
for every $x\in \mathbb{T}^{2}$. Then $\tilde{Y}\in \mathcal{H}$.

\subsection{\textit{Apriori} estimate for the density process $Z$}

If $Y\in \mathcal{H}$, in particular $Y$ is a bounded function on $\Omega
\times \lbrack 0,T]\times \mathbb{T}^{2}$. $||Y||_{\infty }$ denotes the
essential bound of $Y$ on $\Omega \times \lbrack 0,T]\times \mathbb{T}^{2}$.

Suppose $Y\in \mathcal{H}$ such that $||Y||_{\infty }\leq C_{1}$. Define $%
\tilde{Y}=\mathcal{L}(Y)$, and $\tilde{Z}$ is the density process of the
martingale part of $\tilde{Y}$, that is, $(\tilde{Y},\tilde{Z})$ by solving
the following linear BSDE 
\begin{equation}
d\tilde{Y}=\langle \tilde{Z},K(Y)\rangle ds+\sqrt{2\nu }\langle \tilde{Z}%
,dB\rangle \text{, }\tilde{Y}_{T}=\xi  \label{6-2-1}
\end{equation}
where $|\xi (w,t,x)|\leq C_{1}$. By the maximal principle, $|\tilde{Y}%
(w,t,x)|\leq C_{1}$.

For simplicity, we will use $\mathbb{E}^{\mathcal{F}_{t}}$ to denote the
conditional expectation $\mathbb{E}\{\cdot |\mathcal{F}_{t}\}$.

By It\^{o}'s calculus,

\begin{eqnarray*}
|\tilde{Y}_{t}|^{2} &=&|\xi |^{2}-2\nu \int_{t}^{T}|\tilde{Z}%
|^{2}ds-2\int_{t}^{T}\tilde{Y}\langle \tilde{Z},K(Y)\rangle ds \\
&&-2\sqrt{2\nu }\int_{t}^{T}\tilde{Y}\langle \tilde{Z},dB\rangle \text{ .}
\end{eqnarray*}
First take conditional expectations, to obtain that

\begin{eqnarray*}
|\tilde{Y}_{t}|^{2}+2\nu \mathbb{E}^{\mathcal{F}_{t}}\int_{t}^{T}|\tilde{Z}%
|^{2}ds &=&\mathbb{E}^{\mathcal{F}_{t}}|\xi |^{2}-2\mathbb{E}^{\mathcal{F}%
_{t}}\int_{t}^{T}\tilde{Y}\langle \tilde{Z},K(Y)\rangle ds \\
&\leq &C_{1}^{2}+2C_{1}\mathbb{E}^{\mathcal{F}_{t}}\int_{t}^{T}|\langle 
\tilde{Z},K(Y)\rangle |ds
\end{eqnarray*}%
then integrating over $\mathbb{T}^{2}$ and using the estimate from the
maximum principle, we have

\begin{eqnarray*}
&&||\tilde{Y}_{t}||^{2}+2\nu \mathbb{E}^{\mathcal{F}_{t}}\int_{t}^{T}||%
\tilde{Z}||^{2}ds \\
&\leq &C_{1}^{2}+2C_{1}\mathbb{E}^{\mathcal{F}_{t}}\int_{t}^{T}\int_{\mathbb{%
T}^{2}}|\langle \tilde{Z},K(Y)\rangle |ds \\
&\leq &C_{1}^{2}+2C_{1}\mathbb{E}^{\mathcal{F}_{t}}\int_{t}^{T}||K(Y)||||%
\tilde{Z}||ds \\
&\leq &C_{1}^{2}+C_{1}\mathbb{E}^{\mathcal{F}_{t}}\int_{t}^{T}\left[
\varepsilon ||K(Y)||^{2}+\frac{1}{\varepsilon }||\tilde{Z}||^{2}\right] ds \\
&\leq &C_{1}^{2}+\varepsilon C_{1}C_{0}\mathbb{E}^{\mathcal{F}_{t}}\left[
\int_{t}^{T}||Y||^{2}ds\right] +\frac{C_{1}}{\varepsilon }\mathbb{E}^{%
\mathcal{F}_{t}}\left[ \int_{t}^{T}||\tilde{Z}||^{2}ds\right] \text{.}
\end{eqnarray*}%
for every $\varepsilon >0$. Recall that 
\begin{equation*}
||\tilde{Z}||_{BMO}^{2}=\text{ess}\sup_{\Omega \times \lbrack 0,T]}\mathbb{E}%
^{\mathcal{F}_{t}}\int_{t}^{T}||\tilde{Z}||^{2}ds\text{ .}
\end{equation*}%
It follows that 
\begin{equation}
||\tilde{Z}||_{BMO}^{2}\leq \frac{C_{1}}{2\nu }\left[ C_{1}+T\varepsilon
C_{1}^{2}C_{0}+\frac{1}{\varepsilon }||\tilde{Z}||_{BMO}^{2}\right] \text{.}
\label{6-4-1}
\end{equation}%
Choose $\varepsilon =\frac{C_{1}}{\nu }$, we obtain 
\begin{equation*}
||\tilde{Z}||_{BMO}\leq \frac{C_{1}}{\nu }\sqrt{\nu +TC_{0}C_{1}^{2}}\text{ .%
}
\end{equation*}

That is, the norms $||\tilde{Y}||_{\infty }$ and $||\tilde{Z}||_{BMO}$ are
uniformly bounded, depending only on $\nu ,C_{1}$, $C_{0}$ and $T$.

\subsection{Contraction property}

Let $\alpha $ be a real number to be chosen later, and consider $%
Y_{t}^{\alpha }=e^{\alpha t}Y_{t}$ and $\tilde{Y}_{t}^{\alpha }=e^{\alpha t}%
\tilde{Y}_{t}$. Then, according to integration by parts 
\begin{equation*}
d\tilde{Y}^{\alpha }=\langle \tilde{Z},K(Y^{\alpha })\rangle ds+\sqrt{2\nu }%
\langle \tilde{Z}^{\alpha },dB\rangle +\alpha \tilde{Y}^{\alpha }dt\text{.}
\end{equation*}

Denoting $\delta Y^{\alpha }=Y^{\alpha }-Y^{\prime \alpha }$ and $\delta
Z^{\alpha }=Z^{\alpha }-Z^{\prime \alpha }$. Then 
\begin{equation*}
d(\delta \tilde{Y}^{\alpha })=\Phi ds+\alpha (\delta \tilde{Y}^{\alpha })ds+%
\sqrt{2\nu }\langle \delta \tilde{Z}^{\alpha },dB\rangle
\end{equation*}%
where 
\begin{equation*}
\Phi _{s}=\langle \tilde{Z}_{s},K(Y_{s}^{\alpha })\rangle -\langle \tilde{Z}%
_{s}^{\prime },K(Y_{s}^{\prime \alpha })\rangle \text{ .}
\end{equation*}%
It follows that 
\begin{eqnarray*}
|\delta \tilde{Y}_{t}^{\alpha }|^{2} &=&-2\nu \int_{t}^{T}|\delta \tilde{Z}%
^{\alpha }|^{2}ds-2\alpha \int_{t}^{T}|\delta \tilde{Y}^{\alpha }|^{2}ds \\
&&-2\int_{t}^{T}(\delta \tilde{Y}^{\alpha })\Phi ds-2\sqrt{2\nu }%
\int_{t}^{T}(\delta \tilde{Y}^{\alpha })\langle \delta \tilde{Z}^{\alpha
},dB\rangle
\end{eqnarray*}%
by taking conditional expectation given the information up to $\mathcal{F}%
_{t}$ to obtain 
\begin{eqnarray*}
|\delta \tilde{Y}_{t}^{\alpha }|^{2} &=&-2\nu \mathbb{E}^{\mathcal{F}%
_{t}}\int_{t}^{T}|\delta \tilde{Z}^{\alpha }|^{2}ds-2\alpha \mathbb{E}^{%
\mathcal{F}_{t}}\int_{t}^{T}|\delta \tilde{Y}^{\alpha }|^{2}ds \\
&&-2\mathbb{E}^{\mathcal{F}_{t}}\int_{t}^{T}(\delta \tilde{Y}^{\alpha })\Phi
ds\text{.}
\end{eqnarray*}%
Now integrating over $\mathbb{T}^{2}$, to obtain 
\begin{eqnarray}
||\delta \tilde{Y}_{t}^{\alpha }||^{2} &=&-2\nu \mathbb{E}^{\mathcal{F}%
_{t}}\int_{t}^{T}||\delta \tilde{Z}^{\alpha }||^{2}ds-2\alpha \mathbb{E}^{%
\mathcal{F}_{t}}\int_{t}^{T}||\delta \tilde{Y}^{\alpha }||^{2}ds  \notag \\
&&-2\mathbb{E}^{\mathcal{F}_{t}}\int_{t}^{T}\int_{\mathbb{T}^{2}}(\delta 
\tilde{Y}^{\alpha })\Phi ds  \label{13-2-4}
\end{eqnarray}%
Let us write for simplicity 
\begin{equation*}
J(t)=||\delta \tilde{Y}_{t}^{\alpha }||^{2}+2\nu \mathbb{E}^{\mathcal{F}%
_{t}}\int_{t}^{T}||\delta \tilde{Z}^{\alpha }||^{2}ds+2\alpha \mathbb{E}^{%
\mathcal{F}_{t}}\int_{t}^{T}||\delta \tilde{Y}^{\alpha }||^{2}ds\text{.}
\end{equation*}%
Then (\ref{13-2-4}) implies that 
\begin{eqnarray}
J(t) &=&-2\mathbb{E}^{\mathcal{F}_{t}}\int_{t}^{T}\int_{\mathbb{T}%
^{2}}(\delta \tilde{Y}^{\alpha })\Phi ds  \notag \\
&\leq &2\mathbb{E}^{\mathcal{F}_{t}}\int_{t}^{T}||\delta \tilde{Y}^{\alpha
}||||\Phi ||ds  \notag \\
&\leq &2\left( \mathbb{E}^{\mathcal{F}_{t}}\int_{t}^{T}||\delta \tilde{Y}%
^{\alpha }||^{2}ds\right) ^{\frac{1}{2}}\left( \mathbb{E}^{\mathcal{F}%
_{t}}\int_{t}^{T}||\Phi ||^{2}ds\right) ^{\frac{1}{2}}  \label{13-02-11}
\end{eqnarray}%
which yields that 
\begin{eqnarray}
J(t) &\leq &2\left( \mathbb{E}^{\mathcal{F}_{t}}\int_{t}^{T}||\delta \tilde{Y%
}^{\alpha }||^{2}ds\right) ^{\frac{1}{2}}\left( \mathbb{E}^{\mathcal{F}%
_{t}}\int_{t}^{T}||\Phi ||^{2}ds\right) ^{\frac{1}{2}}  \notag \\
&\leq &2\alpha \mathbb{E}^{\mathcal{F}_{t}}\int_{t}^{T}||\delta \tilde{Y}%
^{\alpha }||^{2}ds+\frac{1}{2\alpha }\mathbb{E}^{\mathcal{F}%
_{t}}\int_{t}^{T}||\Phi ||^{2}ds\text{.}  \label{13-02-26-1}
\end{eqnarray}

Let us now consider the last integral appearing on the right-hand side of (%
\ref{13-02-26-1}). It is clear that 
\begin{eqnarray*}
||\Phi _{s}|| &=&||\tilde{Z}_{s}\cdot K(Y_{s}^{\alpha })-\tilde{Z}%
_{s}^{\prime }\cdot K(Y_{s}^{\prime \alpha })|| \\
&=&||\tilde{Z}_{s}\cdot K(\delta Y_{s}^{\alpha })+\delta \tilde{Z}%
_{s}^{\alpha }\cdot K(Y_{s}^{\prime })|| \\
&\leq &||K(\delta Y_{s}^{\alpha })||||\tilde{Z}_{s}||+||K(Y_{s}^{\prime
})||||\delta \tilde{Z}_{s}^{\alpha }|| \\
&\leq &C_{0}||\delta Y_{s}^{\alpha }||||\tilde{Z}_{s}||+C_{0}||Y_{s}^{\prime
}||||\delta \tilde{Z}_{s}^{\alpha }||
\end{eqnarray*}%
plugging into (\ref{13-02-26-1}) we conclude that 
\begin{eqnarray}
J(t) &\leq &2\alpha \mathbb{E}^{\mathcal{F}_{t}}\int_{t}^{T}||\delta \tilde{Y%
}^{\alpha }||^{2}ds+\frac{C_{0}^{2}C_{1}^{2}}{\alpha }\frac{\nu
+TC_{0}C_{1}^{2}}{\nu ^{2}}||\delta Y^{\alpha }||_{\infty }^{2}  \notag \\
&&+\frac{C_{0}^{2}C_{1}^{2}}{\alpha }\mathbb{E}^{\mathcal{F}%
_{t}}\int_{t}^{T}||\delta \tilde{Z}^{\alpha }||^{2}ds  \label{13-02-26-02}
\end{eqnarray}%
where we have used the uniform bounds

\begin{equation*}
||\tilde{Y}||_{\infty }\leq C_{1}\text{ and }||\tilde{Z}||_{BMO}\leq \frac{%
C_{1}}{\nu }\sqrt{\nu +TC_{0}C_{1}^{2}}\text{ .}
\end{equation*}
Choose $\alpha >0$ such that 
\begin{equation*}
\frac{C_{0}^{2}C_{1}^{2}}{\alpha }\frac{\nu +TC_{0}C_{1}^{2}}{\nu ^{2}}\leq 
\frac{1}{16}\text{, }\frac{C_{0}^{2}C_{1}^{2}}{\alpha }\leq \frac{\nu }{4}
\end{equation*}
then (\ref{13-02-26-02}) yields that 
\begin{equation*}
||\delta \tilde{Y}^{\alpha }||_{\infty }+||\delta \tilde{Z}^{\alpha
}||_{BMO}\leq \frac{1}{2}||\delta Y^{\alpha }||_{\infty }\text{ .}
\end{equation*}

\begin{theorem}
\label{th-3}There is $\alpha >0$ such that, $\mathcal{L}$ is a contraction
on $\mathcal{H}$ under the norm 
\begin{equation*}
||Y||_{\alpha ,\infty }=||Y^{\alpha }||_{\infty }+||Z^{\alpha }||_{BMO}
\end{equation*}%
where $Z_{t}^{\alpha }=e^{\alpha t}Z_{t}$ and $Z$ is the density process of
the martingale part of $Y$.
\end{theorem}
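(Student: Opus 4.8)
The analytic core of this statement is already contained in the two preceding subsections, so the remaining task is to package those estimates into the assertion that $\mathcal{L}$ is a well-defined contraction. The plan is threefold. First, I would check that $\mathcal{L}$ genuinely maps $\mathcal{H}$ into itself, which is the part not yet made explicit. Second, I would fix $\alpha$ so that the contraction constant drops strictly below one. Third, I would record that $||\cdot||_{\alpha,\infty}$ is equivalent to the unweighted norm, so that $(\mathcal{H},||\cdot||_{\alpha,\infty})$ is complete and the fixed point sought in Theorem \ref{main-th} can later be produced by iterating $\mathcal{L}$.

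For the well-definedness, fix $Y\in\mathcal{H}$ and let $\tilde{Y}=\mathcal{L}(Y)$ solve the linear BSDE (\ref{7-9-e2}). Since $Y(w,t,\cdot)\in W^{2,2}(\mathbb{T}^{2})$, the elliptic estimate shows that $K(Y(w,t,\cdot))$ gains a derivative and is in particular continuous and bounded, so for each $x$ the coefficient $h(\cdot,x)=K(Y(\cdot,\cdot))(x)$ meets the hypotheses of Lemma \ref{lem-q1}, which yields $\tilde{Y}(\cdot,x)$ and its density process $\tilde{Z}(\cdot,x)$ through the exponential representation. I would then verify the three membership conditions of $\mathcal{H}$ in turn: the bound $||\tilde{Y}||_{\infty}\leq||\xi||_{\infty}=:C_{1}$ follows from the maximum principle Lemma \ref{lem-max-1}, the drift $\langle\tilde{Z},K(Y)\rangle$ being absorbed by a Girsanov change of measure; the spatial regularity $\tilde{Y}(w,t,\cdot)\in W^{2,2}(\mathbb{T}^{2})$ follows by differentiating the representation formula twice in $x$, exactly as in the computation preceding Corollary \ref{lem-dec1}, using again that $K$ gains a derivative; and the BMO property of the martingale part with the associated essential-supremum bound is precisely the \emph{apriori} estimate (\ref{6-4-1}), which gives $||\tilde{Z}||_{BMO}\leq(C_{1}/\nu)\sqrt{\nu+TC_{0}C_{1}^{2}}$. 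Hence $\tilde{Y}\in\mathcal{H}$.

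For the contraction I would take two inputs $Y,Y'\in\mathcal{H}$ bounded by $C_{1}$, run the difference computation already carried out, and choose $\alpha>0$ so large that
\begin{equation*}
\frac{C_{0}^{2}C_{1}^{2}}{\alpha}\,\frac{\nu+TC_{0}C_{1}^{2}}{\nu^{2}}\leq\frac{1}{16}\qquad\text{and}\qquad\frac{C_{0}^{2}C_{1}^{2}}{\alpha}\leq\frac{\nu}{4}.
\end{equation*}
With this $\alpha$, estimate (\ref{13-02-26-02}) collapses, after cancelling the $2\alpha$-terms and absorbing the $\delta\tilde{Z}^{\alpha}$-integral into the $2\nu$-term on the left, to $||\delta\tilde{Y}^{\alpha}||_{\infty}+||\delta\tilde{Z}^{\alpha}||_{BMO}\leq\frac{1}{2}||\delta Y^{\alpha}||_{\infty}$. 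Since $||\delta Y^{\alpha}||_{\infty}\leq||Y-Y'||_{\alpha,\infty}$ and the left-hand side is exactly $||\mathcal{L}(Y)-\mathcal{L}(Y')||_{\alpha,\infty}$, this is the desired contraction with constant $\tfrac{1}{2}$. Finally, because $e^{\alpha t}$ is bounded above and below on $[0,T]$, the norm $||\cdot||_{\alpha,\infty}$ is equivalent to the $\alpha=0$ norm, so membership in and completeness of $\mathcal{H}$ are unaffected by the reweighting; the exponential weight serves only to shrink the Gronwall-type constant.

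The step I expect to be the main obstacle is the well-definedness together with the correct bookkeeping of norms. The energy identity, once integrated over $\mathbb{T}^{2}$ and combined with the $L^{2}$-bound $||K(\delta Y^{\alpha})||\leq C_{0}||\delta Y^{\alpha}||$, naturally controls the \emph{spatial} $L^{2}$-norm of $\delta\tilde{Y}$ and the $BMO$-norm of $\delta\tilde{Z}$; it is the quadratic coupling $\langle\tilde{Z},K(Y)\rangle$ that forces precisely this route, since only the $L^{2}$-estimate for $K$ tames the nonlinearity without costing integrability. The genuine $L^{\infty}$ and $W^{2,2}$ properties therefore enter as uniform \emph{a priori} bounds, preserved by $\mathcal{L}$ via the maximum principle and elliptic regularity, rather than as contracted quantities; the care required is to check that these uniform bounds survive the passage from the fixed-$x$ equations to the function-valued formulation and pass to the limit of the iteration, so that the contraction and the membership in $\mathcal{H}$ are genuinely compatible.
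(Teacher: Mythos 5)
Your proposal follows essentially the same route as the paper: the maximum principle gives the uniform bound $|\tilde{Y}|\leq C_{1}$, the It\^{o} energy identity gives the BMO bound on $\tilde{Z}$, the elliptic $L^{2}$-estimate for $K$ controls the quadratic coupling, and the contraction is obtained from the weighted norm with exactly the paper's choice of $\alpha$. Your explicit verification that $\mathcal{L}$ maps $\mathcal{H}$ into itself and your remark on the equivalence of the weighted and unweighted norms are welcome additions that the paper leaves implicit, but they do not change the argument.
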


We are now in a position to complete the proof of Theorem \ref{main-th}. The
sequence of Picard's iteration is constructed as the following. Begin with 
\begin{equation*}
Y_{0}(t,x)=\mathbb{E}\left\{ \left. \xi (x)\right| \mathcal{F}_{t}\right\}
\end{equation*}
(here we mean the continuous version of the optional projection of $\xi $)
and $Z_{0}$ is the density process of $Y_{0}$ with respect to the Brownian
motion determined by It\^{o}'s martingale representation. Since $\xi \in
W^{2,2}(\mathbb{T}^{2})$, so $Y_{0}(t,\cdot )\in W^{2,2}(\mathbb{T}^{2})$
for all $t$ almost surely. Define $Y_{n+1}=\mathcal{L}(Y_{n})$ for $%
n=0,1,2,\cdots $. Then Lemma \ref{lem-q1} implies that all $Y_{n}\in 
\mathcal{H}$ and in particular $(t,x)\rightarrow Y_{n}(\cdot ,t,x)$ are
continuous almost surely, so that 
\begin{equation}
\mathbb{P}\{|Y_{n}(t,x)|\leq C_{1}\text{ for all }(t,x,n)\in \lbrack
0,T]\times \mathbb{T}^{2}\times \mathbb{N}\}=1\text{.\ }  \label{bd-01}
\end{equation}
Theorem \ref{th-3} implies that $\{Y_{n}\}$ is a Cauchy sequence under the
norm $||\cdot ||_{\alpha ,\infty }$ for some $\alpha >0$, and therefore has
a limit $Y$ which is a solution to (\ref{b-12-01}).

\providecommand{\bysame}{\leavevmode\hbox to3em{\hrulefill}\thinspace} %
\providecommand{\MR}{\relax\ifhmode\unskip\space\fi MR } 
\providecommand{\MRhref}[2]{  \href{http://www.ams.org/mathscinet-getitem?mr=#1}{#2}
} \providecommand{\href}[2]{#2}


\end{document}